\newcommand{\Mdef}[2]{\newcommand{#1}{\relax \ifmmode #2 \else $#2$\fi}}
\newcommand{\sm }{\wedge}
\newcommand{\map}{\mathrm{map}}
\newcommand{\Hom}{\mathrm{Hom}}
\Mdef{\bhom}{\mathbf{\hat{H}om}}
\Mdef{\Mod}{\mathrm{mod}}
\newcommand{\st}{\; | \;}
\newtheorem{thm}{Theorem}[section]
\newtheorem{theorem}{Theorem}[section]
\newtheorem{proposition}[thm]{Proposition}
\newtheorem{corollary}[thm]{Corollary}
\theoremstyle{definition}
\newtheorem{definition}[thm]{Definition}
\newtheorem{example}[thm]{Example}
\newtheorem{remark}[thm]{Remark}
\newcommand{\qqed}{\qed \\[1ex]}
\renewenvironment{proof}[1][\hspace*{-.8ex}]{\noindent {\bf Proof #1:\;}}{\qqed}
\Mdef{\PH} {\Phi^H}
\Mdef{\PK} {\Phi^K}
\Mdef{\PL} {\Phi^L}
\Mdef{\PT} {\Phi^{\T}}
\Mdef{\ef}{E{\cF}_+}
\Mdef{\etf}{\widetilde{E}{\cF}}
\Mdef{\eg}{E{G}_+}
\Mdef{\etg}{\tilde{E}{G}}
\Mdef{\infl}{\mathrm{inf}}
\Mdef{\defl}{\mathrm{def}}
\Mdef{\res}{\mathrm{res}}
\Mdef{\ind}{\mathrm{ind}}
\Mdef{\coind}{\mathrm{coind}}
\Mdef{\univ}{\mathcal{U}}
\Mdef{\Fp}{\mathbb{F}_p}
\Mdef{\Zpinfty}{\Z /p^{\infty}}
\Mdef{\Zpadic}{\Z_p^{\wedge}}
\newcommand{\bi}{\begin{itemize}}
\newcommand{\be}{\begin{enumerate}}
\newcommand{\bc}{\begin{center}}
\newcommand{\bd}{\begin{description}}
\newcommand{\ei}{\end{itemize}}
\newcommand{\ee}{\end{enumerate}}
\newcommand{\ec}{\end{center}}
\newcommand{\ed}{\end{description}}
\newcommand{\adjunction}[4]{
\diagram
#1:#2 \rrto<0.7ex> &&
#3  \llto<0.7ex> :#4 
\enddiagram}
\newcommand{\lra}{\longrightarrow}
\newcommand{\iso}{\cong}
\Mdef{\we}{\mathbf{we}}
\Mdef{\fib}{\mathbf{fib}}
\Mdef{\cof}{\mathbf{cof}}
\Mdef{\BI}{\mathcal{BI}}
\Mdef{\A}{\mathbb{A}}
\Mdef{\B}{\mathbb{B}}
\Mdef{\C}{\mathbb{C}}
\Mdef{\D}{\mathbb{D}}
\Mdef{\E}{\mathbb{E}}
\Mdef{\T}{\mathbb{T}}
\Mdef{\F}{\mathbb{F}}
\Mdef{\G}{\mathbb{G}}
\Mdef{\I}{\mathbb{I}}
\Mdef{\N}{\mathbb{N}}
\Mdef{\Q}{\mathbb{Q}}
\Mdef{\R}{\mathbb{R}}
\Mdef{\bbS}{\mathbb{S}}
\Mdef{\Z}{\mathbb{Z}}
\Mdef{\bA}{\mathbb{A}}
\Mdef{\bB}{\mathbb{B}}
\Mdef{\bC}{\mathbb{C}}
\Mdef{\bD}{\mathbb{D}}
\Mdef{\bE}{\mathbb{E}}
\Mdef{\bF}{\mathbb{F}}
\Mdef{\bG}{\mathbb{G}}
\Mdef{\bH}{\mathbb{H}}
\Mdef{\bI}{\mathbb{I}}
\Mdef{\bJ}{\mathbb{J}}
\Mdef{\bK}{\mathbb{K}}
\Mdef{\bL}{\mathbb{L}}
\Mdef{\bM}{\mathbb{M}}
\Mdef{\bN}{\mathbb{N}}
\Mdef{\bO}{\mathbb{O}}
\Mdef{\bP}{\mathbb{P}}
\Mdef{\bQ}{\mathbb{Q}}
\Mdef{\bR}{\mathbb{R}}
\Mdef{\bS}{\mathbb{S}}
\Mdef{\bT}{\mathbb{T}}
\Mdef{\bU}{\mathbb{U}}
\Mdef{\bV}{\mathbb{V}}
\Mdef{\bW}{\mathbb{W}}
\Mdef{\bX}{\mathbb{X}}
\Mdef{\bY}{\mathbb{Y}}
\Mdef{\bZ}{\mathbb{Z}}
\Mdef{\cA}{\mathcal{A}}
\Mdef{\cB}{\mathcal{B}}
\Mdef{\cC}{\mathcal{C}}
\Mdef{\mcD}{\mathcal{D}} % Something funny about \cD.
\Mdef{\cE}{\mathcal{E}}
\Mdef{\cF}{\mathcal{F}}
\Mdef{\cG}{\mathcal{G}}
\Mdef{\mcH}{\mathcal{H}} % There's something funny about \cH: it 
\Mdef{\cI}{\mathcal{I}}
\Mdef{\cJ}{\mathcal{J}}
\Mdef{\cK}{\mathcal{K}}
\Mdef{\mcL}{\mathcal{L}}% There's something funny about \cL: it 
\Mdef{\cM}{\mathcal{M}}
\Mdef{\cN}{\mathcal{N}}
\Mdef{\cO}{\mathcal{O}}
\Mdef{\cP}{\mathcal{P}}
\Mdef{\cQ}{\mathcal{Q}}
\Mdef{\mcR}{\mathcal{R}}% There's something funny about \cR: it 
\Mdef{\cS}{\mathcal{S}}
\Mdef{\cT}{\mathcal{T}}
\Mdef{\cU}{\mathcal{U}}
\Mdef{\cV}{\mathcal{V}}
\Mdef{\cW}{\mathcal{W}}
\Mdef{\cX}{\mathcal{X}}
\Mdef{\cY}{\mathcal{Y}}
\Mdef{\cZ}{\mathcal{Z}}
\Mdef{\At}{\tilde{A}}
\Mdef{\Bt}{\tilde{B}}
\Mdef{\Ct}{\tilde{C}}
\Mdef{\Et}{\tilde{E}}
\Mdef{\Ht}{\tilde{H}}
\Mdef{\Kt}{\tilde{K}}
\Mdef{\Lt}{\tilde{L}}
\Mdef{\Mt}{\tilde{M}}
\Mdef{\Nt}{\tilde{N}}
\Mdef{\Pt}{\tilde{P}}
\Mdef{\tA}{\tilde{A}}
\Mdef{\tB}{\tilde{B}}
\Mdef{\tC}{\tilde{C}}
\Mdef{\tE}{\tilde{E}}
\Mdef{\tH}{\tilde{H}}
\Mdef{\tK}{\tilde{K}}
\Mdef{\tL}{\tilde{L}}
\Mdef{\tM}{\tilde{M}}
\Mdef{\tN}{\tilde{N}}
\Mdef{\tP}{\tilde{P}}
\Mdef{\ft}{\tilde{f}}
\Mdef{\xt}{\tilde{x}}
\Mdef{\yt}{\tilde{y}}
\Mdef{\Ab}{\overline{A}}
\Mdef{\Bb}{\overline{B}}
\Mdef{\Cb}{\overline{C}}
\Mdef{\Db}{\overline{D}}
\Mdef{\Eb}{\overline{E}}
\Mdef{\Fb}{\overline{F}}
\Mdef{\Gb}{\overline{G}}
\Mdef{\Hb}{\overline{H}}
\Mdef{\Ib}{\overline{I}}
\Mdef{\Jb}{\overline{J}}
\Mdef{\Kb}{\overline{K}}
\Mdef{\Lb}{\overline{L}}
\Mdef{\Mb}{\overline{M}}
\Mdef{\Nb}{\overline{N}}
\Mdef{\Ob}{\overline{O}}
\Mdef{\Pb}{\overline{P}}
\Mdef{\Qb}{\overline{Q}}
\Mdef{\Rb}{\overline{R}}
\Mdef{\Sb}{\overline{S}}
\Mdef{\Tb}{\overline{T}}
\Mdef{\Ub}{\overline{U}}
\Mdef{\Vb}{\overline{V}}
\Mdef{\Wb}{\overline{W}}
\Mdef{\Xb}{\overline{X}}
\Mdef{\Yb}{\overline{Y}}
\Mdef{\Zb}{\overline{Z}}
\Mdef{\db}{\overline{d}}
\Mdef{\hb}{\overline{h}}
\Mdef{\qb}{\overline{q}}
\Mdef{\rb}{\overline{r}}
\Mdef{\tb}{\overline{t}}
\Mdef{\ub}{\overline{u}}
\Mdef{\vb}{\overline{v}}
\Mdef{\hc}{\hat{c}}
\Mdef{\he}{\hat{e}}
\Mdef{\hf}{\hat{f}}
\Mdef{\hA}{\hat{A}}
\Mdef{\hH}{\hat{H}}
\Mdef{\hJ}{\hat{J}}
\Mdef{\hM}{\hat{M}}
\Mdef{\hP}{\hat{P}}
\Mdef{\hQ}{\hat{Q}}
\Mdef{\thetab}{\overline{\theta}}
\Mdef{\phib}{\overline{\phi}}
\Mdef{\uA}{\underline{A}}
\Mdef{\uB}{\underline{B}}
\Mdef{\uC}{\underline{C}}
\Mdef{\uD}{\underline{D}}
\Mdef{\bolda}{\mathbf{a}}
\Mdef{\boldb}{\mathbf{b}}
\Mdef{\boldD}{\mathbf{D}}
\Mdef{\fm}{\frak{m}}
\Mdef{\fp}{\frak{p}}
\Mdef{\eps}{\epsilon}
\renewcommand{\Et}{\cE_t}
\newcommand{\Rdot}{R^{\lrcorner}}
\newcommand{\Mdot}{M^{\lrcorner}}
\newcommand{\M}{\bM}
\newcommand{\tensorR}{\otimes_R}
\newcommand{\modcat}[1]{\mbox{$#1$-mod}}
\newcommand{\cKcellM}{\mbox{$\cK$-cell-$\bM$}}
\newcommand{\Rmod}{\modcat{R}}
\newcommand{\Rdotmod}{\modcat{\Rdot}}
\newcommand{\cEi}{\cE^{-1}}
\newcommand{\Ho}{\mathrm{Ho}}
\Mdef{\SSS}{R}
\Mdef{\RR}{T}
\begin{document}
\title{The Cellularization Principle for Quillen adjunctions} 

\author{J.~P.~C.~Greenlees}
\email{j.greenlees@sheffield.ac.uk}
\address{Department of Pure Mathematics, The Hicks Building, 
Sheffield S3 7RH. UK.}

\author{B.~Shipley}
\email{shipleyb@uic.edu}
\thanks{The first author is grateful for support under EPSRC grant
  number EP/H040692/1.  This material is based upon work by the second author supported by the National Science Foundation under Grant No. DMS-1104396.
}
\address{Department of Mathematics, Statistics and Computer Science, University of Illinois at
Chicago, 508 SEO m/c 249,
851 S. Morgan Street,
Chicago, IL, 60607-7045, USA}

%\classification{55U35, 55P42, 55P60.}
\subjclass[2000]{55U35, 55P42, 55P60.}

\keywords{cellularization, Quillen model category, stable model category}

\begin{abstract}
The Cellularization Principle states that under rather weak
conditions, a Quillen adjunction of stable model categories induces a Quillen equivalence on
cellularizations provided there is a derived equivalence on cells. We
give a proof together with a range of examples. 
\end{abstract}

% Leave these items like this, and the journal will fill them in.
%\received{Month Day, Year}   % receive date (for example: October 11, 1999)
%\revised{Month Day, Year}    % date of revision; omit, if no revision;
                             % if multiple revisions, separate by commas
%\published{Month Day, Year}  % publish date
%\submitted{Editor}      % Name of Journal's Editor, who handled Article 
%\volumeyear{} % Volume Year
%\volumenumber{} % Volume Number 
%\issuenumber{}   % Issue Number
%\startpage{}     % PageNumber of first page
%\articlenumber{} % Sequence number of article within issue
% If copyright is retained by author, comment this out:
%\owner{International Press}

\maketitle

%\tableofcontents
\section{Introduction}
The purpose of this paper is to publicize a useful general principle
when comparing model categories: whenever one has a Quillen
adjunction 
$$\adjunction{F}{\M}{\N}{U}$$
comparing two stable model categories, we obtain another Quillen
adjunction by cellularizing the two model categories with
respect to corresponding objects. Furthermore we obtain a Quillen {\em equivalence}
provided the cells are small and the derived unit or counit is an equivalence on cells.  
In this case, the cellularization of the adjunction induces
a homotopy category level equivalence between the respective
localizing subcategories. The hypotheses are mild, and the 
statement  may appear like a tautology.  The Cellularization Principle can be
directly compared to another extremely
powerful formality, that a natural transformation of cohomology
theories that is an isomorphism on spheres is an equivalence.

%Often, we choose {\em generators} of one of the categories and then cellularize the
%other with respect to their images. 

This result was first proved in an appendix of the original versions of \cite{tnq3},  but the range of 
cases where the conclusion is useful led us to present the result
separately from that particular application.  

The paper is layed out as follows: in Section \ref{app:cell} we give the statement and proof of the
Cellularization Principle, and  the following sections give  a
selection of examples. 
\section{Cellularization of model categories}\label{app:cell}

Throughout the paper we need to consider models for categories of
cellular objects, thought of as built from a set of basic cells using
coproducts and cofibre sequences. These models are usually obtained
by the process of cellularization (sometimes known as colocalization
or right localization) of model categories, with the
cellular objects appearing as the cofibrant objects. Because it is 
fundamental to our work, we recall some of the basic definitions from~\cite{hh}.

\begin{definition}\cite[3.1.8]{hh}\label{def.k.eq}
Let $\M$ be a model category and $\cK$ be a set of objects in $\M$.
A map $f: X \to Y$ is a {\em $\cK$-cellular equivalence}
if for every element $A$ in $\cK$ the induced map of homotopy function complexes~\cite[17.4.2]{hh} 
$f_*: \map(A, X) \to \map(A, Y)$ is a weak equivalence. 
An object $W$ is {\em $\cK$-cellular} if $W$ is cofibrant in $\M$ and 
$f_*: \map(W, X) \to \map(W, Y)$ is a weak equivalence for
any $\cK$-cellular equivalence $f$.
\end{definition}

One can cellularize a right proper model category under very mild finiteness
hypotheses. To avoid confusion due to the dual use of the word
``cellular''  we recall that a {\em cellular} model
category is a cofibrantly generated model category with smallness
conditions on its generating cofibrations and acyclic cofibrations 
\cite[12.1.1]{hh}.

\begin{proposition}\cite[5.1.1]{hh}\label{prop 5.5}
Let $\M$ be a right proper, %$\Hirsch$-
cellular model category   
and let $\cK$ be a set of objects in $\M$.  The $\cK$-cellularized
model category $\cKcellM$ exists: it has the same underlying category
as $\M$, its weak equivalences are the $\cK$-cellular equivalences,
the fibrations the same as in the original model structure on $\M$, and the cofibrations
are the maps with the left lifting property with respect
to the trivial fibrations. 
%maps which are fibrations and $A$-cellular equivalences.
The cofibrant objects are the $\cK$-cellular objects.
\end{proposition}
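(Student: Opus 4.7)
The plan is to follow the standard construction of right Bousfield localizations due to Hirschhorn. There are three layers: (i) formal properties of the new weak equivalences, (ii) construction of a suitable enlarged set of generating trivial cofibrations adapted to $\cK$, and (iii) verification of factorization and lifting using the cellularity and right properness hypotheses.

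First, I would verify that the class of $\cK$-cellular equivalences satisfies 2-out-of-3 and is closed under retracts; both pass through each functor $\map(A,-)$ from the corresponding properties of weak equivalences of simplicial sets. Since any weak equivalence of $\M$ induces weak equivalences on all homotopy function complexes, every weak equivalence of $\M$ is a $\cK$-cellular equivalence, so the new class of weak equivalences strictly enlarges the old. Declaring fibrations to be those of $\M$ and cofibrations to be the maps with the left lifting property against the trivial fibrations of $\cKcellM$ (by definition, the fibrations of $\M$ that are $\cK$-cellular equivalences) then automatically places the cofibrations of $\cKcellM$ inside the cofibrations of $\M$.

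Half of the factorization axiom is inherited: a cofibration followed by trivial fibration factorization in $\M$ is still valid in $\cKcellM$, since a trivial fibration of $\M$ is in particular a fibration that is a $\cK$-cellular equivalence. For the other half, one enlarges the generating set of trivial cofibrations by adjoining a family of horns built from $\cK$, chosen so that the right lifting property against them is equivalent to being a fibration $f$ for which $\map(A,f)$ is a trivial fibration of simplicial sets for every $A \in \cK$. The cellularity hypothesis on $\M$ provides the smallness of domains needed to run the small object argument and produce the required functorial factorization.

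The main obstacle is the lifting axiom on the trivial cofibration side: one must show that every cofibration of $\cKcellM$ which is a $\cK$-cellular equivalence is a retract of a relative cell complex on these new horns, so that it lifts against any fibration. This is where right properness is crucial, as it is needed to transfer $\cK$-cellular equivalences across the pullbacks used in the lifting argument, to exchange a given map with a fibrant replacement when testing against objects of $\cK$, and to ensure that the transfinite compositions produced by the small object argument remain $\cK$-cellular equivalences. Once this is in place, the identification of the cofibrant objects is formal: a cofibrant $W$ in $\cKcellM$ is in particular cofibrant in $\M$, and the lifting property for $\emptyset \to W$ against fibrations that are $\cK$-cellular equivalences translates, via a standard mapping-space argument, into the statement that $\map(W,-)$ carries $\cK$-cellular equivalences to weak equivalences, which is exactly the condition of being $\cK$-cellular.
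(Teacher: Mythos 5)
The paper does not prove this result; it simply cites Hirschhorn \cite[5.1.1]{hh}, so there is no internal argument to compare against. Measured against Hirschhorn's actual proof, your sketch has the right ingredients (new weak equivalences, small object argument on adjoined horns, right properness for the hard lifting step), but two structural points are inverted in a way that matters. Under cellularization the fibrations and acyclic cofibrations of $\cKcellM$ coincide with those of $\M$, while the acyclic fibrations grow and the cofibrations shrink. Consequently the factorization that transfers for free from $\M$ is the (acyclic cofibration, fibration) one — not the (cofibration, acyclic fibration) one, as you claim. Your version breaks because the first leg of the $\M$-factorization is only an $\M$-cofibration, and $\M$-cofibrations need not be $\cKcellM$-cofibrations since that class has gotten smaller. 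Relatedly, the horn set $\Lambda \cK$ is adjoined to the generating acyclic cofibrations $J$ to produce a set of generating \emph{cofibrations} $J \cup \Lambda \cK$ for $\cKcellM$, the maps with RLP against this set being the new \emph{acyclic fibrations}. Calling this an enlargement of the generating acyclic cofibrations cannot be right, since the acyclic cofibrations do not change at all.

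This inversion then undermines your treatment of the lifting axiom. You propose to show that a $\cKcellM$-cofibration which is a $\cK$-cellular equivalence is a retract of a relative $(J\cup\Lambda\cK)$-cell complex; but such cell complexes are arbitrary $\cKcellM$-cofibrations, not acyclic ones, so this would not give lifts against mere fibrations. What one actually has to prove — and this is precisely where right properness does the heavy lifting in Hirschhorn — is that a $\cKcellM$-cofibration that is a $\cK$-cellular equivalence is already a weak equivalence of $\M$, hence an $\M$-acyclic cofibration, hence lifts against $\M$-fibrations $=$ $\cKcellM$-fibrations. Finally, your argument identifying cofibrant objects with $\cK$-cellular objects only gives one containment; the converse, that an $\M$-cofibrant $\cK$-cellular object is $\cKcellM$-cofibrant, is not formal and relies on realizing such objects up to weak equivalence as $\Lambda\cK$-cell complexes.
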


\begin{remark} \label{rem-cell-qe} Since the $\cK$-cellular equivalences are defined using
homotopy function complexes, the $\cK$-cellularized model category
$\cKcellM$ depends only on the homotopy type of the objects in $\cK$.   
\end{remark}

It is useful to have the following further characterization of the
cofibrant objects.   

\begin{proposition}\cite[5.1.5]{hh}\label{prop-cell-obj}
If $\cK$ is a set of cofibrant objects in $\M$, then the class of $\cK$-cellular objects 
agrees with the smallest class of cofibrant ojects in $\M$ that
contains $\cK$ and is closed under homotopy colimits and weak
equivalences.  
\end{proposition}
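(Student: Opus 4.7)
The plan is to prove two inclusions. Let $\cS$ denote the smallest class of cofibrant objects in $\M$ that contains $\cK$ and is closed under homotopy colimits and weak equivalences, and let $\cC$ denote the class of $\cK$-cellular objects. We want $\cS = \cC$.

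For $\cS \subseteq \cC$, I would argue by minimality, showing that $\cC$ itself contains $\cK$ and is closed under the two operations. The containment $\cK \subseteq \cC$ is immediate from Definition \ref{def.k.eq}, since the objects of $\cK$ are cofibrant by hypothesis and $\map(A,f)$ is by definition a weak equivalence whenever $A \in \cK$ and $f$ is a $\cK$-cellular equivalence. Closure under weak equivalences of cofibrant objects follows from Remark \ref{rem-cell-qe}, or directly from the homotopy invariance of $\map(-,X)$ on cofibrant inputs. For closure under homotopy colimits, the key identity is $\map(\hocolim W_i, X) \simeq \holim \map(W_i,X)$; homotopy limits preserve levelwise weak equivalences, so if each $\map(W_i, f)$ is a weak equivalence then so is $\map(\hocolim W_i, f)$. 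One chooses a cofibrant model for the homotopy colimit so as to land back in the class of cofibrant objects.

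For the harder direction $\cC \subseteq \cS$, I would invoke Proposition \ref{prop 5.5}: a $\cK$-cellular object $W$ is precisely a cofibrant object in the cellularized model category $\cKcellM$. Since $\cKcellM$ is cofibrantly generated (which is part of Hirschhorn's construction of right Bousfield localization), $W$ is a retract of a transfinite composition of pushouts along generating cofibrations of $\cKcellM$. The crucial input is Hirschhorn's explicit description of these generating cofibrations, in which the objects of $\cK$ appear as the basic cells being attached. Cofibrancy of the elements of $\cK$ ensures that each attaching pushout is a homotopy pushout, and the transfinite composition along cofibrations is a homotopy colimit; retracts are in particular weak equivalences. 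Hence $W$ is weakly equivalent to an object built from $\cK$ by iterated homotopy colimits, so $W \in \cS$.

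The main obstacle is the second direction, specifically the identification of the generating cofibrations of $\cKcellM$ with maps involving the objects of $\cK$. This is the technical heart of right Bousfield localization and is exactly where the right-properness and cellularity hypotheses on $\M$ required by Proposition \ref{prop 5.5} enter. Once a cell decomposition of a cofibrant object in $\cKcellM$ is in hand, reinterpreting it as a presentation by iterated homotopy colimits of elements of $\cK$ is a routine exercise in standard homotopical bookkeeping.
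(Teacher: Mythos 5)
The paper gives no proof of this proposition --- it is cited directly from Hirschhorn \cite[5.1.5]{hh} --- so your reconstruction should be measured against Hirschhorn's argument.

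Your inclusion $\cS \subseteq \cC$ is correct and matches the standard argument: $\cC$ contains $\cK$ by definition; $\cC$ is closed under weak equivalences between cofibrant objects by homotopy invariance of $\map(-,X)$ (Remark~\ref{rem-cell-qe} is about something slightly different, namely the independence of $\cKcellM$ from the choice of representatives in $\cK$, but your second justification is the right one); and closure under homotopy colimits follows from the equivalence $\map(\hocolim W_i, X) \simeq \holim \map(W_i, X)$ for suitably cofibrant diagrams, together with the fact that $\holim$ preserves objectwise weak equivalences between objectwise fibrant diagrams. Minimality of $\cS$ then gives the inclusion.

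The inclusion $\cC \subseteq \cS$ is where your argument has a genuine gap. You assert that $\cKcellM$ is cofibrantly generated ``as part of Hirschhorn's construction of right Bousfield localization,'' and then run the small object argument to extract a $\Lambda\cK$-cell decomposition of a cofibrant object. But Hirschhorn's existence theorem (Proposition~\ref{prop 5.5}, i.e.\ \cite[5.1.1]{hh}) does \emph{not} establish cofibrant generation of $\cKcellM$. Unlike left Bousfield localization, the cofibrations of a right localization are specified only by a lifting property against an enlarged class of trivial fibrations, and Hirschhorn produces no generating set for them. Indeed, cofibrant generation of right Bousfield localizations is delicate and generally fails or is unknown; the present paper's Proposition~\ref{prop-gen} does invoke a generating set $J \cup \Lambda\cK$, but cites Barnes--Roitzheim \cite[4.9]{BR.stable}, which requires stability --- a hypothesis that is absent from Proposition~\ref{prop-cell-obj}.

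What Hirschhorn actually provides, and what you should appeal to instead, is a functorial $\cK$-colocal cofibrant approximation built by transfinitely attaching cells from the set of horns $\Lambda(\cK)$. This ``cellularization construction'' is proved directly (not via a small object argument internal to $\cKcellM$) to land in $\Lambda(\cK)$-cell complexes and to be a $\cK$-colocal weak equivalence into its target. Given a $\cK$-cellular $W$, one obtains a weak equivalence $\widetilde{W} \to W$ with $\widetilde{W}$ a $\Lambda(\cK)$-cell complex; the latter is built from objects of $\cK$ by iterated homotopy pushouts along cofibrations and transfinite compositions, hence lies in $\cS$, and closure of $\cS$ under weak equivalences puts $W$ in $\cS$ as well. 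Once you replace the erroneous cofibrant-generation claim with this colocal-approximation result, the rest of your bookkeeping is fine.
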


Throughout this paper we consider stable cellularizations of stable model categories.  Say that a set $\cK$ is {\em stable} if for any $A \in \cK$ all of its suspensions (and desuspensions) are also in $\cK$ up to weak equivalence.   That is, since the cellularization only depends on the homotopy type of elements in $\cK$, if $A \in \cK$, then for all $i \in \Z$ there are objects $B_{i} \in \cK$ with $B_{i} \simeq \Sigma^i A$. In this case, for $\M$ a stable model category and $\cK$ a stable set of  objects, $\cKcellM$ is again a stable model category; see~\cite[4.6]{BR.stable}.  In this case, one can use homotopy classes of maps instead of the homotopy function complexes in Definition~\ref{def.k.eq}.  That is, a map $f: X \to Y$ is a $\cK$-cellular equivalence if and only if for every element $A$ in $\cK$ the induced map $[A, X]_{*} \to [A, Y]_{*}$ is an isomorphism; see~\cite[4.4]{BR.stable}.

\begin{proposition}\label{prop-gen}
If $\M$ is a right proper, stable, cellular model category and $\cK$ is stable, then 
$\cK$ detects trivial objects.  In other words,  an object $X$ is
trivial  in $\Ho(\cKcellM)$, if and only if for each element $A$ in
$\cK$, $[A, X]_{*}=0$, and this group of graded morphisms can equally
be calculated in the homotopy category of $\M$.
\end{proposition}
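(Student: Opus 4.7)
The plan is to unwind the definition of triviality in $\Ho(\cKcellM)$ and apply the stable characterization of $\cK$-cellular equivalences recalled in the paragraph immediately preceding the proposition (which is~\cite[4.4]{BR.stable}).

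First I would observe that $X$ is trivial in $\Ho(\cKcellM)$ if and only if the canonical map $X \to \ast$ to the zero object is a weak equivalence in $\cKcellM$, i.e.\ a $\cK$-cellular equivalence. The stable form of Definition~\ref{def.k.eq} stated just before the proposition then asserts that this is the case if and only if $[A,X]_* \to [A,\ast]_*$ is an isomorphism for every $A \in \cK$. Since $[A,\ast]_* = 0$, this amounts exactly to $[A,X]_* = 0$ for all $A \in \cK$, which is the claimed equivalence.

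The remaining point, and the only substantive one, is that these graded morphism groups may equally be computed in $\Ho(\M)$. For $A \in \cK$ taken to be $\M$-cofibrant (permissible by Remark~\ref{rem-cell-qe}), derived mapping out of $A$ agrees in the two model structures: by Proposition~\ref{prop 5.5} the fibrations in $\cKcellM$ and $\M$ coincide, and every $\M$-weak equivalence is a $\cK$-cellular equivalence, so any $\M$-fibrant replacement $X \to X^{f}$ is simultaneously a $\cKcellM$-fibrant replacement; moreover $\M$-path objects on $X^{f}$ remain $\cKcellM$-path objects, so the homotopy relation on maps $A \to X^{f}$ is the same in both structures. Hence $[A,X]_*$ is unambiguously defined in either homotopy category, finishing the proof. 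The main obstacle is precisely this coincidence of derived mapping calculations, but it is already subsumed by~\cite[4.4]{BR.stable} cited above, so no new argument is required.
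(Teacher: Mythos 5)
Your proof is correct, and it takes a genuinely different route from the one in the paper. The paper's proof invokes Hovey~\cite[7.3.1]{hovey-model}, which says that the cofibres of the generating cofibrations of a (pointed, cofibrantly generated, stable) model category detect trivial objects; it then uses the explicit set of generating cofibrations $J \cup \Lambda\cK$ for $\cKcellM$ from~\cite[4.9]{BR.stable}, and observes that the cofibres of the horns $\Lambda\cK$ are, by stability of $\cK$, either contractible or weakly equivalent to objects of $\cK$. Your argument bypasses the generating-cofibration machinery entirely: you note that triviality of $X$ in $\Ho(\cKcellM)$ is, by the elementary fact that weak equivalences are precisely the maps inverted by localization, the same as $X \to \ast$ being a $\cK$-cellular equivalence, and then apply the stable characterization of $\cK$-cellular equivalences from~\cite[4.4]{BR.stable} that the paper has already recorded just before the proposition. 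Both approaches make essential use of the stability hypothesis on $\cK$ at the same point --- in the paper to identify the cofibres of horns with objects of $\cK$, in yours to have the $[A,-]_*$-characterization of cellular equivalences available at all. Your treatment of the final clause (that $[A,X]_*$ agrees in the two homotopy categories) is also correct: fibrations (hence trivial cofibrations) coincide, so fibrant replacements and path objects transfer, and $A\in\cK$ cofibrant in $\M$ is cofibrant in $\cKcellM$ by Proposition~\ref{prop-cell-obj}. Your route is the more elementary one; what the paper's route buys is an explicit connection between $\cK$ and a set of generating cofibrations for $\cKcellM$, which is sometimes independently useful but is not needed for the statement at hand.
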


\begin{proof}
By~\cite[7.3.1]{hovey-model}, the set of cofibres of the generating cofibrations detects trivial objects.  In this stable situation, a set of generating cofibrations is produced as follows in~\cite[4.9]{BR.stable}.  Define a set of horns on $\cK$ by $\Lambda \cK = \{X \otimes \partial\Delta[n]_{+} \to X\otimes \Delta[n]_{+} | n \geq 0, X \in \cK\}$ where here $\otimes$ is defined using framings (see~\cite{BR.stable}, \cite{hovey-model}, or \cite{hh}).  If $J$ is the set of generating acyclic cofibrations in $\M$, then $J \cup \Lambda \cK$ is the set of generating cofibrations for $\cKcellM$. Since $\cK$ is stable, the cofibres of these maps are either contractible or are weakly equivalent to objects in $\cK$ again.  
\end{proof}

Under a finiteness condition $\cK$ is also a set of generators.
An object $K$ is  small in the homotopy category (from now on simply {\em
  small\footnote{some authors use `compact' for this notion}}) if,  for any set of objects
$\{Y_{\alpha}\}$, the natural map $\bigoplus_{\alpha} [K,
Y_{\alpha}]\lra [K,\bigvee_{\alpha}Y_{\alpha}]$ is an isomorphism. 

\begin{corollary}\cite[2.2.1]{ss2} \label{cor-gen}
If $\M$ is a right proper, stable, cellular model category and $\cK$ is a stable
set of small objects, then $\cK$ is a set of generators of $\Ho(\cKcellM)$.  That is,
the only localizing subcategory containing $\cK$ is $\Ho(\cKcellM)$ itself.  
\end{corollary}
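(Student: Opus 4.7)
The plan is to argue directly that the smallest localizing subcategory $\cL \subseteq \Ho(\cKcellM)$ containing $\cK$ equals the whole homotopy category, by producing, for any object $X$, a $\cK$-cellular tower in $\cL$ whose homotopy colimit is equivalent to $X$. The input from the previous proposition is crucial: by Proposition~\ref{prop-gen}, an object is trivial in $\Ho(\cKcellM)$ iff $[A,-]_* = 0$ for every $A \in \cK$. So it suffices to build a map $Y_\infty \to X$ with $Y_\infty \in \cL$ that is an isomorphism on $[A,-]_*$ for all $A \in \cK$; its fibre will then be zero, forcing $Y_\infty \simeq X$, hence $X \in \cL$.

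The tower is standard. First form
\[ Y_0 = \bigvee_{\alpha} A_\alpha \longrightarrow X, \]
where the wedge runs over all graded homotopy classes of maps $A_\alpha \to X$ with $A_\alpha \in \cK$ (using stability of $\cK$ to absorb shifts). By construction $[A, Y_0]_* \to [A, X]_*$ is surjective for every $A \in \cK$. Inductively, given $Y_n \to X$, let $F_n$ be its fibre, form $Z_n = \bigvee_{\beta} A_\beta$ over all graded maps $A_\beta \to F_n$ with $A_\beta \in \cK$, and define $Y_{n+1}$ by the cofibre sequence $Z_n \to Y_n \to Y_{n+1}$, extending the map to $X$ via the null-composite $Z_n \to F_n \to Y_n \to X$. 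This kills, at the next stage, every element in the kernel of $[A, Y_n]_* \to [A, X]_*$. Setting $Y_\infty = \hocolim_n Y_n$ yields the desired comparison.

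To verify the two properties of $Y_\infty$: the smallness of each $A \in \cK$ gives $[A, Y_\infty]_* \cong \colim_n [A, Y_n]_*$, so the map to $[A, X]_*$ remains surjective (from stage $0$) and becomes injective (elements killed in $X$ at stage $n$ are killed in $Y_{n+1}$ by construction). Hence the fibre of $Y_\infty \to X$ has $[A, -]_* = 0$ for all $A \in \cK$, and is therefore trivial by Proposition~\ref{prop-gen}. On the other hand, $Y_0 \in \cL$ as a coproduct of objects in $\cK$, each step $Y_n \to Y_{n+1}$ stays in $\cL$ because $\cL$ is triangulated and closed under coproducts, and $Y_\infty \in \cL$ because a sequential homotopy colimit can be expressed as a cofibre of a self-map of a coproduct, and $\cL$ is localizing. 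Thus $X \simeq Y_\infty \in \cL$, so $\cL = \Ho(\cKcellM)$.

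The main obstacle is the correct use of smallness: one must know $[A, -]_*$ commutes with the sequential hocolim in $\Ho(\cKcellM)$, not just in $\Ho(\M)$. This is fine because, as noted in the previous proposition, the relevant graded morphism groups $[A, -]_*$ for $A \in \cK$ are the same in $\Ho(\cKcellM)$ and $\Ho(\M)$, and smallness in $\Ho(\M)$ is what is hypothesized. Apart from this compatibility and the verification that the construction stays inside $\cL$, the proof is the standard small-object / cell-attaching argument and indeed reproduces~\cite[2.2.1]{ss2} in this cellularized setting.
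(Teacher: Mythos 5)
Your argument is correct and is precisely the cell-attaching argument underlying the cited reference~\cite[2.2.1]{ss2}; the paper itself offers no independent proof beyond that citation, so you have simply reconstructed the intended argument. The one point you flag yourself — that smallness and the computation of $[A,-]_*$ transfer between $\Ho(\M)$ and $\Ho(\cKcellM)$ — is handled adequately by Proposition~\ref{prop-gen} together with the fact that the identity is a left Quillen functor $\cKcellM\to\M$, so coproducts and sequential homotopy colimits of $\cK$-cellular objects agree in the two homotopy categories.
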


Our main theorem states that given a Quillen pair,  appropriate cellularizations of
model categories preserve Quillen adjunctions and induce Quillen
equivalences.   

\begin{theorem}\label{prop-cell-qe} {\bf (The Cellularization Principle.)}
Let $\M$ and $\N$ be right proper, stable, %$\Hirsch$-
cellular model
categories with $F: \M \to \N$ a left Quillen 
functor with right adjoint $U$.  Let $Q$ be a cofibrant replacement functor in $\M$ and
$R$ a fibrant replacement functor in $\N$. 
\begin{enumerate}
\item Let $\cK= \{ A_{\alpha} \}$ be a set of objects in $\M$ with $FQ\cK = \{ FQ A_{\alpha}\}$ the corresponding set in $\N$.  
Then $F$ and $U$
induce a Quillen adjunction 
\[ \adjunction{F}{\cKcellM}{\mbox{$FQ\cK$-cell-$\N$}}{U}\]
between the $\cK$-cellularization
of $\M$ and the $FQ\cK$-cellularization of $\N$.

\item If $\cK= \{ A_{\alpha} \}$ is a stable set of small objects in
  $\M$ such that for each $A$ in $\cK$ the object $FQA$ is small in $\N$ and 
the derived unit $QA \to URFQA$ is a weak equivalence in $\M$, 
then $F$ and $U$ induce a Quillen equivalence between the cellularizations:
\[ \cKcellM \simeq_{Q} \mbox{$FQ\cK$-cell-$\N$}. \]

\item If  $\mcL = \{B_{\beta}\}$ is a stable set of small objects in
  $\N$ such that for each $B$ in $\mcL$ the object $URB$ is small in $\M$ and  
the derived counit  $FQURB \to RB$ is a weak equivalence in $\N$,  then $F$ and $U$
induce a Quillen equivalence between the cellularizations:
\[ \mbox{$UR\mcL$-cell-$\M$} \simeq_{Q} \mbox{$\mcL$-cell-$\N$}.\]
\end{enumerate} 
\end{theorem}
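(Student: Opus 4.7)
The plan is to establish Part (1) by a direct adjunction argument on homotopy function complexes, and then to deduce Parts (2) and (3) from Part (1) by combining Corollary~\ref{cor-gen} with a standard localizing-subcategory argument at the level of homotopy categories. For Part (1), since the cellularized model structures share their fibrations with the originals, $U$ automatically preserves fibrations, so it suffices to verify that $U$ preserves trivial fibrations. This reduces to showing that $U$ sends $FQ\cK$-cellular equivalences to $\cK$-cellular equivalences, which follows from the derived adjunction identity
\[
\map(A, UX) \simeq \map(QA, UX) \simeq \map(FQA, X)
\]
on homotopy function complexes: the first step uses homotopy invariance of $\map$ under the cofibrant replacement $QA$, and the second is the derived form of the Quillen adjunction $(F,U)$ after fibrant replacement of $X$ as needed.

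For Part (2), one already has the Quillen adjunction from Part (1). By Corollary~\ref{cor-gen}, $\cK$ and $FQ\cK$ are sets of small generators of $\Ho(\cK\text{-cell-}\M)$ and $\Ho(FQ\cK\text{-cell-}\N)$ respectively. Smallness of $FQA$ in $\N$ together with smallness of $QA$ in $\M$ implies, via the identification $[QA,\mathbf{R}UY] \cong [FQA,Y]$, that the derived right adjoint $\mathbf{R}U$ commutes with coproducts of objects in $\Ho(FQ\cK\text{-cell-}\N)$. Consequently the full subcategory of $\Ho(\cK\text{-cell-}\M)$ on which the derived unit is an equivalence is closed under coproducts and triangles, hence is localizing; by hypothesis it contains $\cK$, so by Corollary~\ref{cor-gen} it is the whole category, making $\mathbf{L}F$ fully faithful. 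Its essential image is then a localizing subcategory of $\Ho(FQ\cK\text{-cell-}\N)$ containing the generating set $FQ\cK$, hence $\mathbf{L}F$ is essentially surjective. Thus $\mathbf{L}F$ is an equivalence and the Quillen adjunction is a Quillen equivalence.

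Part (3) proceeds dually. The hypothesis that the derived counit $FQURB \to RB$ is an equivalence on $\mcL$, combined with Remark~\ref{rem-cell-qe}, identifies $FQ(UR\mcL)$-cell-$\N$ with $\mcL$-cell-$\N$, so Part (1) supplies the needed Quillen adjunction. The symmetric localizing-subcategory argument, applied this time to the derived counit using smallness of $URB$ in $\M$ and smallness of $B$ in $\N$, extends the counit equivalence from $\mcL$ to all of $\Ho(\mcL\text{-cell-}\N)$, and essential surjectivity follows as before. The principal obstacle throughout is the verification that the derived functors commute with the relevant coproducts: this is precisely what the smallness hypotheses in (2) and (3) deliver, and without them the localizing-subcategory closure step on which the generator argument rests would fail.
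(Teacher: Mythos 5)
Your proposal follows essentially the same route as the paper: Part (1) via the derived adjunction identity $\map(A,URX)\simeq\map(FQA,X)$ and Hirschhorn's criterion for descending a Quillen adjunction to right localizations; Part (2) by showing the derived right adjoint preserves coproducts (which is exactly where the two smallness hypotheses are used), so that the full subcategories where the unit (resp.\ counit) is an equivalence are localizing, then invoking Corollary~\ref{cor-gen} to extend from the generating set to the whole homotopy category; and Part (3) by applying Remark~\ref{rem-cell-qe} together with the counit hypothesis to identify the two cellularizations and running the dual argument, which is the paper's own reduction of (3) to (2). The only cosmetic difference is that you phrase the final step of Part (2) as ``fully faithful plus essentially surjective'' rather than ``unit an iso everywhere, hence counit an iso on generators, hence everywhere,'' but these are the same argument. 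One small imprecision to note: the reduction in Part (1) to ``$U$ preserves trivial fibrations'' should be stated as the Hirschhorn criterion that $U$ take $FQ\cK$-cellular equivalences \emph{between fibrant objects} to $\cK$-cellular equivalences, which is exactly what the function-complex computation supplies.
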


\begin{proof}
Using the equivalences in~\cite[3.1.6]{hh}, 
the criterion in~\cite[3.3.18(2)]{hh} (see also~\cite[2.2]{hovey-stable}) 
for showing that $F$ and $U$ 
induce a Quillen adjoint pair on the cellularized model categories in $(1)$
is equivalent to requiring that $U$ takes  $FQ\cK$-cellular equivalences between fibrant objects to $\cK$-cellular equivalences.
Any Quillen adjunction induces
a weak equivalence $\map(A, URX) \simeq \map(FQA, X)$
of the homotopy function complexes, see for example~\cite[17.4.15]{hh}.
So a map $f: X \to Y$ induces a weak equivalence
$f_*: \map(FQA, X) \to \map(FQA, Y)$ if and only if
 $Uf_*: \map(A, URX) \to \map(A, URY)$       
is a weak equivalence.  Thus in (1), $U$ preserves (and reflects) the cellular equivalences between fibrant objects. Hence, $U$ induces a Quillen adjunction on the cellularized model categories.

Similarly, $Uf_*: \map(URB, URX) \to \map(URB, URY)$       
is a weak equivalence if and only if $f_*: \map(FQURB, X) \to \map(FQURB, Y)$ is.
Given the hypothesis in $(3)$ that $FQURB \to RB$ is a weak equivalence, it follows that $Uf_{*}$ is a weak equivalence if and only if $f_{*}: \map(B, X) \to \map(B, Y)$ is.  Thus,
 it follows in $(3)$ that $U$ preserves (and reflects) the cellular equivalences between fibrant objects. Hence, $U$ induces a Quillen adjunction on the cellularized model categories.
 Note that the stability of $\M$, $\N$, $\cK$ and $\mcL$ was not necessary for establishing the Quillen adjunction in (1) or (3).

We establish (2) in the next paragraph. One can make very similar arguments for (3) or one can deduce (3) from (2).  To deduce (3) from (2), consider (2) applied to $\cK = UR\mcL$. The hypothesis in (3) implies the hypothesis in (2) and thus produces a Quillen equivalence between $UR\mcL$-cell-$\M$ and $FQUR\mcL$-cell-$\M$.  The hypothesis in (3) also implies that $FQUR\mcL$-cell-$\M$ and $\mcL$-cell-$\M$ are the same cellularization of $\M$.  Thus, (3) follows.  

We now return to the Quillen equivalence in (2). Since $\M$ and $\cK$ are stable, $\cKcellM$ is a stable model category by~\cite[4.6]{BR.stable}.  Since left Quillen functors preserve homotopy cofibre sequences, $FQ\cK$, and hence also $FQ\cK$-cell-$\N$, are stable.  
The Quillen adjunction in (1) induces a derived adjunction on the triangulated homotopy categories; we show that this is actually a derived equivalence.    
Both derived functors are exact (since the left adjoint commutes with suspension and cofibre sequences and the right adjoint commutes with loops and fibre sequences).  As a left adjoint, $F$ also preserves coproducts.  We next show that the right adjoint preserves coproducts as well.

Since $\cK = \{A_{\alpha}\}$ detects $\cK$-cellular equivalences, to show that $U$ preserves coproducts it suffices to show that for each $A_{\alpha} \in \cK$ and any family $\{X_{i}\}$ of objects in $\N$ the natural map 
$$[A_{\alpha}, \bigvee_i UX_{i}] \to [A_{\alpha}, U(\bigvee_i X_{i})]$$ is an isomorphism.  Using the adjunction and the fact that each $A_{\alpha}$ is small, the source can be rewritten as  
$$[A_{\alpha}, \bigvee_i UX_{i}] \iso \bigoplus_{i} [A_{\alpha}, UX_{i}] \iso \bigoplus_{i} [FQA_{\alpha}, X_{i}] .$$
Similarly, using the adjunction, the target is isomorphic to $[FQA_{\alpha}, \bigvee_i X_{i}]$.  Since $FQA_{\alpha}$ is assumed to be small, the source and target are isomorphic and this shows that $U$ commutes with coproducts.

Consider the full subcategories of objects $M$ in $\Ho(\cKcellM)$ and $N$ in $\Ho(\mbox{$FQ\cK$-cell-$\N$})$
such that the unit $QM \to URFQM$ or counit $FQURN \to RN$ of the adjunctions are equivalences.   Since both derived functors are exact and preserve coproducts, these are localizing subcategories.
Since for each $A$ in $\cK$ the unit is an equivalence and $\cK$ is
a set of generators by Corollary~\ref{cor-gen}, the unit is an equivalence on all of $\Ho(\cKcellM)$.   
It follows that the counit is also an equivalence for each object $N = FQA$ in $FQ\cK$.
Since $FQ\cK$ is a set of generators for $\Ho(\mbox{$FQ\cK$-cell-$\N$})$, the counit is 
also always an equivalence.   Statement (2) follows.
\end{proof}

Note that if $F$ and $U$ form a Quillen equivalence on the original categories,
then the conditions in Theorem \ref{prop-cell-qe} parts (2) and (3) are automatically
satisfied.   Thus, they also induce Quillen equivalences on the
cellularizations.   

\begin{corollary}\label{cor-cell-qe}
Let $\M$ and $\N$ be right proper, stable %$\Hirsch$-
cellular model
categories with $F: \M \to \N$ a Quillen 
equivalence with right adjoint $U$.   Let $Q$ be a cofibrant replacement functor in $\M$ and
$R$ a fibrant replacement functor in $\N$. 
\begin{enumerate}
\item Let $\cK= \{ A_{\alpha} \}$ be a stable set of small objects in $\M$, with $FQ\cK = \{ FQ A_{\alpha}\}$ the corresponding
set of objects in $\N$. 
Then $F$ and $U$
induce a Quillen equivalence between the $\cK$-cellularization
of $\M$ and the $FQ\cK$-cellularization of $\N$:
\[ \mbox{\cKcellM}\simeq_Q \mbox{$FQ\cK$-cell-$\N$} \]
\item Let $\mcL = \{B_{\beta}\}$ be a set of small objects in $\N$, with $UR\mcL = \{ URB_{\beta}\}$ the corresponding
set of objects in $\N$.   Then $F$ and $U$
induce a Quillen equivalence between the $\mcL$-cellularization
of $\N$ and the $UR\mcL$-cellularization of $\M$:
\[ \mbox{$UR\mcL$-cell-$\M$} \simeq_{Q} \mbox{$\mcL$-cell-$\N$}\]
\end{enumerate} 
\end{corollary}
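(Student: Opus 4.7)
The plan is to deduce both parts directly from Theorem~\ref{prop-cell-qe}, by checking that when $(F,U)$ is a Quillen equivalence the extra hypotheses of parts $(2)$ and $(3)$ of that theorem come for free. For part $(1)$, we want to apply Theorem~\ref{prop-cell-qe}(2) to $\cK$; for part $(2)$, we want to apply Theorem~\ref{prop-cell-qe}(3) to $\mcL$ (after noting that one should take $\mcL$ to be stable, or else replace it by the stable set of all suspensions, which has the same cellularization by Remark~\ref{rem-cell-qe}).

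The two hypotheses to verify in each case are $(i)$ the derived unit $QA\to URFQA$ (respectively derived counit $FQURB\to RB$) is a weak equivalence, and $(ii)$ the transported cells $FQA$ (respectively $URB$) are small. For $(i)$, the Quillen equivalence hypothesis says exactly that the derived unit is a weak equivalence on every cofibrant object and the derived counit is a weak equivalence on every fibrant object, so $(i)$ is automatic.

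The main point, and the place where a small calculation is required, is $(ii)$: smallness must be transported across the derived equivalence. In part $(1)$, let $A\in\cK$ and let $\{Y_i\}$ be any family in $\N$. Using the adjunction isomorphism $[FQA,-]_{\N}\iso[A,UR(-)]_{\M}$ and the fact that $A$ is small in $\M$, we compute
\[
[FQA,\bigvee_i Y_i]_{\N}\iso [A,UR\bigvee_i Y_i]_{\M}\iso [A,\bigvee_i URY_i]_{\M}\iso\bigoplus_i[A,URY_i]_{\M}\iso\bigoplus_i[FQA,Y_i]_{\N},
\]
where the middle isomorphism uses that the derived right adjoint $RU$ is an equivalence of triangulated categories and therefore commutes with arbitrary coproducts (equivalences preserve all colimits existing in the homotopy category). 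Hence $FQA$ is small in $\N$. The argument for $URB$ in part $(2)$ is strictly dual, using that $LF$ is an equivalence and hence commutes with coproducts.

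With $(i)$ and $(ii)$ in hand, Theorem~\ref{prop-cell-qe}(2) applied to part $(1)$ and Theorem~\ref{prop-cell-qe}(3) applied to part $(2)$ yield the claimed Quillen equivalences of cellularized model categories. The only real obstacle is step $(ii)$; the rest is bookkeeping. Note that stability of the cell sets is used in Theorem~\ref{prop-cell-qe} to invoke Corollary~\ref{cor-gen} on generation, and this is why one must either assume or arrange stability for $\mcL$ in part $(2)$.
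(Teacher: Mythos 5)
Your proposal is correct and matches the route the paper indicates: the paper's ``proof'' is the one-line remark preceding the corollary (the hypotheses of Theorem~\ref{prop-cell-qe}(2) and (3) ``are automatically satisfied'' for a Quillen equivalence), and you have simply filled in the detail of why the transported cells remain small, which is exactly the non-trivial content of that remark. One small slip: Remark~\ref{rem-cell-qe} only says that cellularization is insensitive to replacing cells by weakly equivalent ones; it does not say that cellularizing with respect to $\mcL$ agrees with cellularizing with respect to its closure under (de)suspension, so the appeal to that remark to fix the missing ``stable'' in part~(2) is not quite right --- the correct reading is that $\mcL$ should be assumed stable (as in part~(1) and in Theorem~\ref{prop-cell-qe}(3)), the omission of ``stable'' in the statement being an oversight.
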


In~\cite[2.3]{hovey-stable} Hovey gives criteria for when
localizations preserve Quillen equivalences.  Since
cellularization is dual to localization,  a generalization of this corollary without stability or smallness hypotheses follows from the 
dual of Hovey's statement.   

\section{Smashing localizations}
We suppose given a map $\theta : R \lra T$ of ring spectra (or DGAs). This gives
the extension and restriction of scalars Quillen  adjunction 
$$\adjunction{\theta_*}{\mbox{$R$-mod}}{\mbox{$T$-mod}}{\theta^*},   $$
where $\theta_*N=T\sm_{R} N$.
We apply Theorem~\ref{prop-cell-qe} with $\M=\mbox{$R$-mod}$ and $\N=\mbox{$T$-mod}$. 
The category of $T$-modules is generated by the $T$-module $T$, and we
use that as the generating cell. The following uses the ideas of
the Cellularization Principle. 

\begin{corollary}\label{cor:smashing}
If $T\sm_{R}T \stackrel{\simeq }\lra T$ 
is an equivalence of $R$-modules, then the Quillen pair 
$$\adjunction{\theta_*}{\mbox{$T$-cell-$R$-modules}}
{\mbox{$T$-modules}}{\theta^*}$$
induces 

\begin{enumerate}
\item a Quillen equivalence if $\theta^* T$ is small as an $R$-module, or
\item in general, an equivalence of triangulated categories
$$\mbox{$T$-loc-Ho($R$-modules)}\simeq
\mbox{Ho($T$-modules)}$$
where $loc$ denotes the localizing subcategory. 
\end{enumerate}
\end{corollary}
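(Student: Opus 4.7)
The plan is to apply the Cellularization Principle (Theorem~\ref{prop-cell-qe}) to the adjunction $(\theta_*, \theta^*)$ with the stable set $\mcL = \{\Sigma^i T\}_{i \in \Z}$ in $\N = \mbox{$T$-modules}$. Since $T$ is the monoidal unit it is small and generates $\mbox{Ho($T$-modules)}$, so cellularizing $\mbox{$T$-modules}$ at $\mcL$ leaves the homotopy category unchanged. On the $R$-module side, $UR\mcL$ is homotopically the stable closure of $\{\theta^* T\}$, so the cellularization $\mbox{$UR\mcL$-cell-$R$-modules}$ is precisely the category that the statement calls $\mbox{$T$-cell-$R$-modules}$.

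For part (1), Theorem~\ref{prop-cell-qe}(3) applies directly: the derived counit $\theta_* Q \theta^* T \simeq T\sm_R^{L} T \to T$ is a weak equivalence by the smashing hypothesis, $\theta^* T$ is small by assumption, and $T$ is automatically small in $\mbox{$T$-modules}$. All hypotheses are met and the stated Quillen equivalence drops out.

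For part (2), I revisit the proof of Theorem~\ref{prop-cell-qe}(2). Smallness of $FQA$ enters that argument only to guarantee that $U$ preserves coproducts on homotopy categories; in our case $U = \theta^*$ admits a right adjoint (coinduction) at the underlying point-set level, so it preserves coproducts automatically, and this passes to the triangulated homotopy categories. The rest of that proof then adapts verbatim: the subcategories on which the derived unit and counit are equivalences are localizing; the smashing hypothesis gives that the counit is an equivalence on $T$, hence on all of $\mbox{Ho($T$-modules)}$ since $T$ generates; by adjunction the unit is then an equivalence on $\theta^* T$, hence on the localizing subcategory $\mbox{$T$-loc-Ho($R$-modules)}$ that $\theta^* T$ generates. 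Since $\theta^*$ is exact, preserves coproducts, and sends $T$ to $\theta^* T$, its essential image lies in $\mbox{$T$-loc-Ho($R$-modules)}$, so the restricted derived functors $\theta_*$ and $\theta^*$ are mutually inverse triangulated equivalences.

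The main obstacle I anticipate is handling part (2) cleanly in the absence of smallness. The key observation that unlocks this is that restriction of scalars always commutes with coproducts, which supplies for free the one input that Theorem~\ref{prop-cell-qe}(2) had extracted from smallness; after that the argument is a standard localizing-subcategory bootstrap driven by the behaviour of the unit and counit on the generator $T$.
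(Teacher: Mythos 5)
Your proposal is correct and follows the same route as the paper: apply Part (3) of the Cellularization Principle with $\mcL$ generated by $T$, observe that smallness of $\theta^*T$ gives part (1) immediately, and for part (2) replace the smallness-derived coproduct preservation by the observation that $\theta^*$ preserves coproducts because it is itself a left adjoint (with right adjoint coextension of scalars), then run the standard localizing-subcategory bootstrap. You spell out the bootstrap in more detail than the paper does, but the key ideas coincide exactly.
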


\begin{proof}
In the first case with $\theta^* T$ small, this follows directly from
 Part 3,Theorem~\ref{prop-cell-qe} with $\M=\mbox{$R$-modules}$ and
$\N=\mbox{$T$-modules}$, taking $T$ to be the generator of $\N$. 
In the second case, we again apply Part 3.
Here the hypothesis shows that the counit is a derived equivalence on
cells.  However the  complication is that
$\theta^*T$ will  not usually be small as an $R$-module. 

Nonetheless, the counit is still
a derived equivalence for the $R$-module $ T$. It remains to argue
that the derived counit gives the stated equivalence. 
For this we note that the right adjoint preserves arbitrary sums: this is obvious if we
are working with actual modules, but in general we may use the fact
that $\theta^*$ is also a left adjoint (with right adjoint the
coextension  of scalars). It follows that we have an equivalence of
the localizing subcategories generated by $T$ on the two sides. 
\end{proof}

For the first example, we take $R$ and $T$ to be conventional commutative
rings or DGAs and
$T=\cEi R$ for some multiplicatively closed set $\cE$. The condition
is  satisfied since $\cEi R\otimes_{R} \cEi R\cong \cEi R$ and we find
$$\mbox{$\cEi R$-loc-Ho($R$-modules)}\simeq
\mbox{Ho($\cEi R$-modules)}. $$

\begin{example}
It is worth giving an example to show that we do not obtain a Quillen
equivalence between $T$-cell-$R$-modules and $T$-modules in general. 
This shows  that the smallness hypothesis in the Celllularization Principle is
necessary. 

For this we take $R=\Z$ and $T=\Z [1/p]$. We note that any object $M$  in
the localizing subcategory of $T$-cell-$R$-modules generated by $T$ has the
property  that $M\simeq M[1/p]$. Accordingly $M=\Z /p^{\infty}$ is not
in this localizing subcategory. On the other hand $M$ is not
$T$-cellularly equivalent to 0 since $\Hom (\Z [1/p],
\Z/p^{\infty})\neq 0$.
\end{example}

More generally any smashing localization of module spectra behaves
in a similar way (see~\cite{jjg} for related discussion).
We first suppose that $R$ is a cofibrant $\bS$-algebra spectrum in
the sense of~\cite{ekmm}, where $\bS$ is the sphere spectrum; see also Remark~\ref{rem.cof} below.
We next suppose given a smashing localization $L$ of  the category of 
$R$-module spectra.  By ~\cite[VIII.2.2]{ekmm}, there is a ring map
$\theta: R\lra LR$ so that we
may apply the above discussion with $T=LR$.  The smashing condition 
states that we have an equivalence $LN \simeq LR \sm_{R} N$ for any
$R$-module $N$. 

\begin{proposition}\label{prop.3.2}
Suppose given a cofibrant $\bS$-algebra spectrum $R$ and a smashing localization $L$ on the
category of $R$-modules.  We have a diagram 
$$\diagram
\mbox{$R$-modules} \rto \dto &\mbox{$LR$-modules}\\
L(\mbox{$R$-modules})\urto_{\simeq}&
\enddiagram$$
of left Quillen functors where $L(\mbox{$R$-modules})$ is the localization of the model category of
$R$-modules. These induce a  Quillen equivalence
$$ L(\mbox{$R$-modules})\simeq_Q \mbox{$LR$-modules},  $$
and triangulated equivalences 
$$ \mbox{$LR$-loc-Ho (\mbox{$R$-modules})}\simeq
Ho( L(\mbox{$R$-modules}))\simeq \mbox{Ho($LR$-modules)}.  $$
\end{proposition}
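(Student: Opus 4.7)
The plan is to derive Proposition~\ref{prop.3.2} from Corollary~\ref{cor:smashing} applied to the unit map $\theta: R \to LR$ of the smashing localization, once the model category $L(\mbox{$R$-modules})$ has been identified at the derived level with the full subcategory of $L$-local (equivalently, $LR$-local) objects of $\Ho(\mbox{$R$-modules})$.

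First I would verify the hypothesis of Corollary~\ref{cor:smashing}: the smashing identification $LN \simeq LR \sm_R N$ applied to $N = LR$, together with $L(LR) \simeq LR$ (since $LR$ is already $L$-local), gives $LR \sm_R LR \simeq LR$. Corollary~\ref{cor:smashing}(2) then yields the right-hand triangulated equivalence
\[ \mbox{$LR$-loc-}\Ho(\mbox{$R$-modules}) \simeq \Ho(\mbox{$LR$-modules}). \]
Next I would identify $\Ho(L(\mbox{$R$-modules}))$ with $LR$-loc-$\Ho(\mbox{$R$-modules})$. The class of $L$-local $R$-modules is closed under coproducts and exact triangles and contains $LR$, so it contains the localizing subcategory generated by $LR$; conversely every $L$-local module $X$ satisfies $X \simeq LR \sm_R X$, and the right-hand side is built from $LR$ via the cell filtration of $X$, giving the reverse inclusion.

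The remaining task is to promote the derived equivalence to a Quillen equivalence $L(\mbox{$R$-modules}) \simeq_Q \mbox{$LR$-modules}$. The adjunction $(\theta_*, \theta^*)$ descends to the left Bousfield localization on the source: the restriction $\theta^*$ sends an $LR$-module $Y$ to an $L$-local $R$-module, because the iterated identification $LR \sm_R Y \simeq LR \sm_R LR \sm_{LR} Y \simeq LR \sm_{LR} Y \simeq Y$ uses smashing and the $LR$-action, and $L$-local objects are fibrant in $L(\mbox{$R$-modules})$; so $\theta^*$ still preserves fibrations and trivial fibrations. The derived unit at a cofibrant $X$ is (up to equivalence) the $L$-localization $X \to LX$, an $L$-equivalence by construction, and the derived counit at an $LR$-module $Y$ is $\theta_* \theta^* Y = LR \sm_R Y \to Y$, which is an equivalence by the computation just made.

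The main technical point I expect to need care with is verifying the derived counit is an equivalence throughout $\Ho(\mbox{$LR$-modules})$ and not only on the generator $LR$. Following the pattern of Corollary~\ref{cor:smashing}, I would note that $\theta^*$ is itself a left adjoint (with right adjoint the coextension of scalars along $\theta$), and so preserves arbitrary coproducts; the full subcategory of objects on which the counit is an equivalence is therefore a localizing subcategory containing $LR$, and since $LR$ generates $\Ho(\mbox{$LR$-modules})$ it must be all of it. This completes the Quillen equivalence, and composed with the identification of the second step it delivers all three equivalences in the statement.
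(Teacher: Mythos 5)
Your proposal is correct, and it diverges from the paper's proof in a meaningful way on the Quillen-equivalence step. The paper establishes $L(\mbox{$R$-modules})\simeq_Q \mbox{$LR$-modules}$ by invoking Part (3) of Theorem~\ref{prop-cell-qe} with $\mcL=\{LR\}$: this requires it to argue that $\theta^*LR$ is a small generator of $L(\mbox{$R$-modules})$ (which it does via the universal property of localization together with the smallness of $R$), after which the cellularizations on both sides are the identity and the Cellularization Principle delivers the Quillen equivalence. You instead verify the Quillen equivalence directly: you check that $\theta^*$ lands in $L$-local objects (so the adjunction descends to the left Bousfield localization), that the derived unit at a cofibrant $X$ is the localization map $X\to LX$ (an $L$-equivalence, hence a weak equivalence in $L(\mbox{$R$-modules})$), and that the derived counit $LR\sm_R Y\to Y$ is an equivalence for every $LR$-module $Y$ via the associativity computation $LR\sm_R Y\simeq (LR\sm_R LR)\sm_{LR}Y\simeq Y$. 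This sidesteps the smallness-in-the-localized-category verification entirely, at the cost of redoing the abstract work that the Cellularization Principle packages; the localizing-subcategory fallback you give for the counit is actually unnecessary, since your smash computation already applies uniformly to all $Y$. Both routes then invoke Corollary~\ref{cor:smashing}(2) and the identification of $\Ho(L(\mbox{$R$-modules}))$ with the $L$-local objects to get the triangulated equivalences, so that part is essentially the same. One small imprecision: you say $\theta^*$ "still preserves fibrations and trivial fibrations" in the localized model structure; what is actually true (and all that is needed) is that trivial fibrations are unchanged by left Bousfield localization and that $\theta^*$ sends fibrant $LR$-modules to $L$-local, hence fibrant, objects of $L(\mbox{$R$-modules})$.
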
 

\begin{proof}
The ring map $\theta: R\lra LR=T$ gives the top horizontal left
Quillen functor $\theta_*$. The vertical map is the identity on
underlying categories, which is a left Quillen functor by definition
of the local model structure. The diagonal map is again $\theta_*$, and it
exists because $L$ is smashing so that $L$-equivalences are taken to
equivalences of $LR$-modules. This gives the diagram of left Quillen
functors. 

To see the diagonal is a Quillen equivalence we apply Part 3 of the 
Cellularization Principle. The $LR$-module $LR $ is a small generator 
of $LR$-modules, and the smashing condition means
that  $L(\mbox{$R$-modules})$ is generated by the single object $LR$,
and  the universal property together with the smallness of $R$ shows
$LR$ is small in $L(R-modules)$. 
 
We apply Corollary \ref{cor:smashing} to obtain the statement about 
localizing categories, since the  smashing condition applies to the generator 
$N=LR$ to show the hypotheses hold. 

See also~\cite[VIII.3.2]{ekmm} and~\cite[3.2(iii)]{ss2}. 
\end{proof}

\begin{remark}\label{rem.cof}
For an arbitrary $\bS$-algebra spectrum $T$, consider a cofibrant
replacement $\theta : R \to T$ in $\bS$-algebras and consider the
relationship between smashing localizations of $R$-modules and $T$-modules.
More precisely, suppose given a bifibrant $T$-module $E$ so that $L_E$
(localization with respect to the
$E$-equivalences as in~\cite[VIII.1.1]{ekmm}) is a smashing localization
on $T$-modules.  In this case there is a
corresponding $R$-module $F$, giving a smashing localization $L_F$ on 
$R$-modules such that the localized model categories are Quillen equivalent,
$$ L_F(R\mbox{-modules}) \simeq_Q L_E(T\mbox{-modules}).$$

In fact one can take $F$ to be the cofibrant replacement in
$R$-modules of $\theta^*E$.  Since $\theta :R \to T$ is a weak equivalence, the functors $\theta_*$ and $\theta^*$ induce a Quillen equivalence between the categories of $R$-modules and $T$-modules.  Using the criteria in~\cite[2.3]{hovey-stable} for when localizations preserve Quillen equivalences (dual to Corollary~\ref{cor-cell-qe} above), one can show that the localization model category of $R$-modules with respect to the $F$-equivalences is Quillen equivalent to the localization model category of $T$-modules with respect to the $\theta_* F \iso T \sm_R F$-equivalences.  Since $\theta_*F$ and $E$ are weakly equivalent cofibrant $T$-modules, the $\theta_*F$-equivalences agree with the $E$-equivalences, so we have
$$ L_F(R\mbox{-modules}) \simeq_Q L_{\theta_*F}(T\mbox{-modules})\iso L_E(T\mbox{-modules}).$$
In this situation one can show that there is a weak equivalence $T \sm_R L_F R \simeq L_E T$.  Using this, one can then show that if $L_E$ is smashing, then $L_F$ is also a smashing localization.  Note here though that~\cite[VIII.2.2]{ekmm} only applies to $R$.  So although $L_F R$ is constructed as an $S$-algebra, $L_E T$ is only constructed as a $T$-module.  
\end{remark}

Perhaps the best known example in the category of spectra is when we consider the localization  of $\bS$-modules with respect to $E_n,$ the $n$th $p$-local Morava $E$-theory. We denote localization with respect to any spectrum weakly equivalent to $E_{n}$ by $L_{n}$. Following the remark above, we consider $R = c\bS$, the cofibrant replacement of the sphere spectrum $\bS$ and see that $L_{n}(\bS$-modules$)$ and $L_{n}(c\bS$-modules$)$ are Quillen equivalent.  Proposition~\ref{prop.3.2} then shows that
$$
L_n(\mbox{$\bS$-modules}) 
\simeq_{Q}
L_n(\mbox{$c\bS$-modules})
\simeq_{Q} 
\mbox{$(L_n c\bS)$-modules} 
 $$
and the homotopy categories of these are equivalent to 
$\mbox{$(L_n \bS)$-loc-Ho($\bS$-modules)}$.

\section{Isotropic equivalences of ring $G$-spectra}
We suppose given a map $\theta : R \lra T$ of ring $G$-spectra. This gives
the extension and restriction of scalars Quillen  adjunction 
$$\adjunction{\theta_*}{\mbox{$R$-mod}}{\mbox{$T$-mod}}{\theta^*}.   $$
We apply Theorem~\ref{prop-cell-qe} Part 2 with $\M=\mbox{$R$-mod}$ and $\N=\mbox{$T$-mod}$. 
If $G$ is the trivial group, then $R$ generates $R$-modules and
the Cellularization Principle shows we have an equivalence if $\theta$
is a weak equivalence of $R$-modules. 

If $G$ is non-trivial,  we get a somewhat more
interesting example. The category of $R$-modules is generated by the
extended objects $G/H_+ \sm R$ as $H$ runs through closed subgroups of
$G$ and the unit is the comparison $G/H_+ \sm \theta$.  
If $\cF$ is a family of subgroups, we say $\theta $ is an $\cF$-equivalence if $G/H_+ \sm \theta$ is an equivalence for all $H$ in  $\cF$.  Define $\cF$-cellularization of $R$-mod to be cellularization
with respect to the set of all suspensions and desuspensions of
objects $G/H_+ \sm R$ for $H$ in  $\cF$. Then the
Cellularization Principle shows that if $\theta$ is an
$\cF$-equivalence we have a Quillen equivalence
$$\mbox{$\cF$-cell-$R$-module-$G$-spectra}\simeq
\mbox{$\cF$-cell-$T$-module-$G$-spectra}. $$

\section{Torsion modules}
Let $R$ be a conventional commutative Noetherian ring and $I$ an
ideal.  We apply Theorem~\ref{prop-cell-qe}, with $\N$ the
category of differential graded $R$-modules and $\M$ 
 the category of differential graded $I$-power torsion modules.  
There is an adjunction 
$$\adjunction{i}{ \mbox{$I$-power-torsion-$R$-modules}}{\mbox{$R$-modules}}{\Gamma_I}$$
with left adjoint $i$ the inclusion  and the right adjoint $\Gamma_I$ defined by 
$$\Gamma_I(M)=\{ m \in M\st I^Nm=0 \mbox{ for } N>>0\}. $$
Both of these categories support injective model structures
by~\cite[2.3.13]{hovey-model}, with cofibrations the monomorphisms and
weak equivalences the quasi-isomorphisms.  For torsion-modules, one
needs to bear in mind that  to construct products and inverse limits one forms them in the category of all $R$-modules and then applies the right adjoint $\Gamma_I$; see also~\cite[8.6]{gfreeq}.  With these structures the above adjunction is a Quillen adjunction.

We now consider the Cellularization Principle with $\M=\mbox{$I$-power-torsion-$R$-modules}$ and 
$\N=\mbox{$R$-modules}$. If $I=(x_1, x_2, \ldots, x_n)$, we may form
the  Koszul complex $K:=K(x_1, x_2, \ldots , x_n)$ as the tensor product
of the complexes $R\stackrel{x_i}\lra R$, noting that it is small by
construction and therefore suitable for use as a cell.  Since the
homology of $K$ is $I$-power torsion, $K$ is equivalent to
an object $K'$ in the category of $I$-power torsion modules, which we
may take to be fibrant. We now apply the Cellularization
Principle to give an equivalence 
$$ \mbox{$\Gamma_I K'$-cell-$I$-power-torsion-$R$-modules}\simeq
\mbox{$K'$-cell-$R$-modules}. $$

It is  proved in \cite[6.1]{tec}  that the localizing subcategory 
generated by $R/I$  is also generated by $K\simeq K'$.
By the same proof,  we see that
$K'=\Gamma_IK'$ generates the category of $I$-power torsion modules
and  we conclude
$$ \mbox{$I$-power-torsion-$R$-modules}\simeq
\mbox{$R/I$-cell-$R$-modules}. $$

\section{Hasse equivalences}
The idea here is that if a ring (spectrum or differential graded algebra) $R$ is expressed as the pullback of a
diagram of rings,  the Cellularization Principle lets us build up the
model category of  differential graded $R$-modules from categories of
modules over the terms.  See also~\cite{tnq.diagrams} for a more
general treatment. We apply the standard context of
Theorem~\ref{prop-cell-qe} with $\M$ 
 the category of $R$-modules. 

\subsection{Diagrams of modules}
To describe $\N$ we start with a commutative diagram 
$$\diagram
R\rto^{\alpha} \dto_{\beta} & R^l \dto^{\gamma}\\
R^c \rto_{\delta}&R^t.
\enddiagram$$
of rings.  
\begin{example} The classical Hasse principle is built on the pullback square
$$\diagram 
\Z \rto \dto &\Q \dto\\
\prod_p\Z_p^{\wedge}\rto &(\prod_p\Z_p^{\wedge})\otimes \Q
\enddiagram$$
\end{example}

Returning to the general case, we delete $R$ and consider the diagram 
$$
\Rdot =\left(
\diagram
& R^l \dto^{g}\\
R^c \rto_{d}&R^t
\enddiagram 
\right) $$
with three objects.
We may form the category $\N=\Rdotmod$ of diagrams
$$\diagram
& M^l \dto^{h}\\ 
M^c \rto_{e}&M^t
\enddiagram$$ 
where $M^l$ is an $R^l$-module, $M^c$ is an $R^c$-module, 
$M^t$ is an $R^t$-module and the maps $h$ and $e$ 
are module maps over the corresponding maps of rings.  That is, $h: M^{l} \to g^{*}M^{t}$ is a map of $R^{l}$-modules and $e:M^{c} \to d^{*}M^{t}$ is a map of $R^{c}$-modules. We will return
to model structures below.

\subsection{An adjoint pair}
 Since $\Rdot$ is a diagram of $R$-algebras,  termwise tensor product
gives a  functor
$$\Rdot \tensorR (\cdot ):\Rmod \lra \Rdotmod . $$
Similarly, since $R$ maps to the pullback $P\Rdot$, pullback 
gives a functor
$$P: \Rdotmod\lra \Rmod.$$
It is easily verified that these give an adjoint pair
$$\adjunction{\Rdot\tensorR (\cdot )}{\Rmod}{\Rdotmod}{P}. $$

We may then consider the derived unit
$$\eta: M \lra \underline{P}(\Rdot \tensorR^{L} M).  $$
Since $R$ is the generator of the category of $R$-modules, we want to 
require that $\eta$ is an equivalence
when $M=R$, which is to say the original diagram of rings is 
a homotopy pullback.  In fact, we first fibrantly replace the original diagram of rings in the diagram-injective model category of pull-back diagrams of rings.  See~\cite[5.1.3]{hovey-model} or ~\cite[15.3.4]{hh}.   This is discussed in more detail in~\cite{tnq.diagrams}.

On the other hand, we cannot expect the counit of the adjunction
to be an equivalence since we can add any module to  $M^t$ without changing $P\Mdot$.
This is where the Cellularization Principle comes in. We should use
the image of $R$  to cellularize the category of diagrams of
modules. In preparation for this, we describe the model structure.

\subsection{Model structures}
We give categories of (differential graded) modules over a ring the (algebraically) projective model 
structure, with homology isomorphisms as weak equivalences and
fibrations the surjections. The cofibrations are retracts of relative
cell complexes, where the spheres are shifted copies of $R$.
The category  $\Rdotmod$ gets the diagram-injective
model structure in which cofibrations
and weak equivalences are maps which have this property objectwise; 
the fibrant objects have $\gamma$ and $\delta$ surjective.  
This diagram-injective model structure is shown to exist for more general diagrams of ring
spectra in an appendix of the original versions of \cite{tnq3}, see also \cite{tnq.diagrams}, and the same proof works for DGAs.  

\subsection{The Quillen equivalence}
Since extension of scalars is a left Quillen functor for the (algebraically)
projective model structure for any map of DGAs,  $\Rdot \tensorR -$ 
preserves cofibrations and weak equivalences and is therefore also a 
left Quillen functor.   We then apply the Cellularization Principle
 to obtain the following result. 

\begin{proposition} 
Assume given a commutative square of DGAs which is a 
homotopy pullback. The adjunction induces a Quillen equivalence
$$\Rmod \stackrel{\simeq}\lra \mbox{$\Rdot$-cell-$\Rdot$-mod}, $$
where cellularization is with respect to the image, $\Rdot$, of the
generating $R$-module $R$.
\end{proposition}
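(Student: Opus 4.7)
The plan is to invoke Part~(2) of the Cellularization Principle (Theorem~\ref{prop-cell-qe}) applied to the Quillen adjunction $(\Rdot\tensorR -,\, P)$ between $\Rmod$ with its projective model structure and $\Rdotmod$ with its diagram-injective model structure. I would take $\cK$ to be the stable set of suspensions and desuspensions of the generating $R$-module $R$, so that $FQ\cK$ is the stable set of suspensions of $\Rdot$, and the target of the asserted Quillen equivalence agrees with $\Rdot$-cell-$\Rdotmod$.

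To apply the principle I need to verify right properness, stability and cellularity of the two model categories; smallness of $R$ in $\Rmod$ and of $\Rdot$ in $\Rdotmod$; and the derived unit condition. The properties of the projective model structure on $\Rmod$ are standard, and the corresponding properties of the diagram-injective model structure on $\Rdotmod$ are recalled above. The ring $R$ is small in $\Rmod$ by standard considerations. For smallness of $\Rdot$ in $\Rdotmod$ I would compute graded maps out of $\Rdot$ componentwise: a map of diagrams out of $\Rdot$ is determined by its three components, and each of $R^l$, $R^c$, $R^t$ is small in its own module category, so the coproduct condition follows.

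The crucial hypothesis is the derived unit condition for $R$. Because $R$ is cofibrant and $F R = \Rdot$, this reads $R \to \underline{P}(\Rdot)$, where $\underline{P}$ is computed by fibrantly replacing $\Rdot$ in the diagram-injective structure and then taking the strict pullback. Since fibrant objects in $\Rdotmod$ have $\gamma$ and $\delta$ surjective, this strict pullback is the homotopy pullback of the original square of DGAs; the hypothesis therefore says precisely that the derived unit on $R$ is a weak equivalence, and the condition propagates along the shifts in $\cK$ by stability. Finally, since $R$ is a small generator of $\Rmod$, the $\cK$-cellularization of $\Rmod$ is $\Rmod$ itself (cellular equivalences are simply quasi-isomorphisms), so Theorem~\ref{prop-cell-qe}(2) yields the claimed equivalence. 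The main obstacle is identifying the derived pullback with the homotopy pullback hypothesized in the statement; this rests on the precise control of the diagram-injective fibrant replacement, as discussed in~\cite{tnq.diagrams}.
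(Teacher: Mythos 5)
Your overall strategy is the same as the paper's: apply Part~(2) of Theorem~\ref{prop-cell-qe} with $\cK$ the stable set generated by $R$, use the homotopy-pullback hypothesis to deduce the derived unit condition, and note that $R$-cellularization leaves $\Rmod$ unchanged because $R$ is a small generator. The one place you diverge is the smallness of $\Rdot$ in $\Ho(\Rdotmod)$, and there your argument is too quick. You say ``a map of diagrams out of $\Rdot$ is determined by its three components, and each of $R^l,R^c,R^t$ is small in its own module category, so the coproduct condition follows,'' but a map out of $\Rdot$ is \emph{not} a free triple of maps: it is a compatible triple, so $[\Rdot,M]$ is computed from a pullback (indeed $\Hom(\Rdot,M)\cong PM$, i.e.\ the homotopy fibre of $M^l\oplus M^c\to M^t$ once $M$ is fibrant), not from a product of three hom-groups. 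To conclude that $[\Rdot,-]$ commutes with coproducts you need to argue that this homotopy pullback/fibre commutes with coproducts, which does hold in the stable setting but is an extra step you have elided. The paper sidesteps this by writing $\Rdot$ as a finite homotopy pushout of the three objects $L^lR^l$, $L^cR^c$, $L^tR^t$ (the left adjoints of evaluation applied to the corner rings), each of which is small because $[L^{?}R^{?},M]\cong[R^{?},M^{?}]$ and coproducts in $\Rdotmod$ are levelwise; since a finite homotopy colimit of small objects is small, $\Rdot$ is small. That route is cleaner and avoids having to analyse maps out of $\Rdot$ directly. You should either adopt the paper's homotopy-pushout presentation of $\Rdot$ or supply the Mayer--Vietoris/fibre-sequence argument showing that $\underline{P}$ commutes with coproducts.
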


\begin{proof} 
We apply Theorem \ref{prop-cell-qe}, which states that if we
cellularize the model categories with respect to corresponding sets  
of small objects, we obtain a Quillen adjunction. 

In the present case, we cellularize with respect to the single small $R$-module $R$ on the left, and the corresponding diagram $\Rdot$ on the right.  First we verify that $\Rdot$ is small.  Consider the three evaluation functors from $\Rdot$-modules down to modules over the rings $R^{l}$, $R^{c}$, or $R^{t}$ and the associated left adjoints of these evaluation functors $L^{l}$, $L^{c}$, and $L^{t}$.  The $\Rdot$-module $\Rdot$ is the pushout of the following diagram.
$$\diagram
& L^{l}R^l \\ 
L^{c}R^c &L^{t}R^{t} \lto \uto
\enddiagram$$ 
Indeed, one may explicit: 
 $L^lR^l$ is $\Rdot$ with $R^c $ replaced by 0,  
 $L^cR^c$ is $\Rdot$ with $R^l $ replaced by 0, and 
 $L^tR^t$ is $\Rdot$ with $R^l$ and $R^c $ replaced by 0. 
This diagram is also a homotopy pushout diagram between objects which are cofibrant in the diagram-injective model structure on $\Rdot$-modules.   It follows that $\Rdot$ is small in the homotopy category of $\Rdot$-modules.

Since the original  diagram of rings is a homotopy
pullback,  the unit of the adjunction is an equivalence for $R$,
and we see that the generator $R$ and the generator $\Rdot$ correspond
under the equivalence, as required in the hypothesis in Part (2) of
Theorem \ref{prop-cell-qe}.   

Since $R$ is cofibrant and generates $\Rmod$, cellularization with
respect to $R$ has no effect on  $\Rmod$ and we obtain the stated  equivalence
with the cellularization of $\Rdotmod$ with respect to the diagram
$\Rdot$. 
\end{proof}

\end{document}